      \numberwithin{equation}{section}
      \newcommand{\beq}{\begin{equation}}
      \newcommand{\eeq}{\end{equation}}
      \newcommand{\beqs}{\begin{eqnarray*}}
      \newcommand{\eeqs}{\end{eqnarray*}}
      \newcommand{\beqn}{\begin{eqnarray}}
      \newcommand{\eeqn}{\end{eqnarray}}
      \newcommand{\beqa}{\begin{array}}
      \newcommand{\eeqa}{\end{array}}
      \def\lra{\longrightarrow}
      \def\bc{\begin{center}}
      \def\ec{\end{center}}
      \def\begeq{\begin{equation}}
      \def\endeq{\end{equation}}
      \def\and{\quad{\rm and}\quad}
      \let\lra=\longrightarrow
      \def\mapright\#1{\, \smash{\mathop{\lra}\limits^{\#1}}\, }
      \newtheorem{prop}{Proposition}[section]
      \newtheorem{theo}[prop]{Theorem}
      \newtheorem{lem}[prop]{Lemma}
      \newtheorem{cor}[prop]{Corollary}
      \newtheorem{defi}[prop]{Definition}
\begin{document}

       \title{Steady  gradient Ricci solitons  with   nonnegative curvature operator  away from a compact set}

     \author{Ziyi  $\text{Zhao}^{\dag}$ and Xiaohua $\text{Zhu}^{\ddag}$}

 \address{BICMR and SMS, Peking
 University, Beijing 100871, China.}
 \email{ 1901110027@pku.edu.cn\\\ xhzhu@math.pku.edu.cn}

 \thanks {$\ddag$ partially supported  by National Key R\&D Program of China  2020YFA0712800,
 2023YFA1009900 and  NSFC 12271009.}
 \subjclass[2000]{Primary: 53E20; Secondary: 53C20,  53C25, 58J05}

 \keywords{Steady  gradient Ricci soliton, Ricci flow,  ancient  $\kappa$-solution,   Bryant  Ricci soliton}

      \begin{abstract} Let $(M^n,g)$  $(n\ge 4)$ be a  complete noncompact $\kappa$-noncollapsed steady Ricci soliton with $\rm{Rm}\geq 0$ and $\rm{Ric}> 0$ away from a compact  set  $K$ of $M$.  We prove that  there is no any
       $(n-1)$-dimensional compact  split limit Ricci flow of type I  arising from  the blow-down  of $(M, g)$,   if  there is an $(n-1)$-dimensional  noncompact  split limit Ricci flow.  Consequently,   the compact  split limit ancient flows of type I and type II cannot occur simultaneously  from the  blow-down.  As an application,  we prove that $(M^n,g)$   with $\rm{Rm}\geq 0$  must be isometric the  Bryant Ricci soliton up to scaling,  if  there exists a sequence of   rescaled Ricci flows $(M,g_{p_i}(t); p_i)$ of $(M,g)$ converges subsequently to a family of shrinking quotient cylinders.

   \end{abstract}

       \date{}

    \maketitle



    \section{Introduction}

   Let $(M^n, g; f )$  $(n\ge 4)$ be a complete  noncompact $\kappa$-noncollapsed  steady  gradient Ricci soliton with curvature operator
       $\rm{Rm}\geq 0$ away from a compact  set $K$ of $M$.   Let $g(\cdot, t)=\phi^*_{t}(g)$ $(t\in (-\infty, \infty))$  be an induced ancient Ricci flow of  $(M,g)$,
       where $\phi_{t}$ is a family of transformations generated by the gradient vector field $-\nabla f$.
         For any  sequence of $p_i\in M$ $(\to \infty)$,  we consider the rescaled Ricci  flows $(M, g_{p_i}(t); p_i)$,  where
        \begin{align}\label{rescaling-flow}g_{p_i}(t)=r_i^{-1} g( \cdot, r_it),
  \end{align}
  $r_iR(p_i)=1$.
  By a version of  Perelman's compactness theorem for  ancient  $\kappa$-solutions \cite[Proposition 1.3]{ZZ-4d},
  we know that $(M, g_{p_i}(t); p_i)$   converge  subsequently to a  splitting flow $(N \times \mathbb{R}, \bar {g}(t); p_\infty)$ in the Cheeger-Gromov sense, where
  \begin{align}\label{splitt-solution} \bar g(t)= h(t) +ds^2, ~ {\rm on}~ N\times \mathbb R,
    \end{align}
 and $h(t)$  ($t\in (-\infty, 0]$)  is an ancient   $\kappa$-solution on an  $(n-1)$-dimensional $N$.
 For simplicity, we call $(N,  h(t))$ a compact   split limit  flow (by the  blow-down) if $N$ is compact, otherwise, $(N,  h(t))$ is a  noncompact   split limit  flow   if $N$ is noncompact.

  In this paper, we extend  our previous result  \cite[Corollary 0.3]{ZZ-4d} from the dimension $4$ to any dimension. Namely, we prove

   \begin{theo}\label{Zhao-Z-high-dim}
Let $(M^n, g)$ $(n\ge 4)$  be a  noncompact $\kappa$-noncollapsed steady gradient Ricci soliton with nonnegative curvature operator. Suppose that there exists a sequence  of  $p_i\in M$ $( \to \infty)$ such that the  rescaled Ricci flows $(M,g_{p_i}(t); p_i)$ of $(M,g)$ converge subsequently to a family of shrinking quotient cylinders.  Then $(M, g)$ is isometric to the $n$-dimensional Bryant Ricci soliton up to scaling.
\end{theo}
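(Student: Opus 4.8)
The plan is to bootstrap the hypothesis --- that the blow-down converges \emph{along one sequence} $p_i\to\infty$ to a shrinking quotient cylinder --- up to the stronger statement that $(M,g)$ is \emph{asymptotically cylindrical} (every blow-down limit is the fixed round cylinder $S^{n-1}(\sqrt{(n-1)(n-2)})\times\mathbb{R}$), and then to quote Brendle's higher-dimensional rotational-symmetry rigidity theorem. First I would pin down the pointwise curvature: since $\mathrm{Rm}\ge0$ and the induced flow $g(\cdot,t)$ is an ancient $\kappa$-solution, Hamilton's strong maximum principle forces either $\mathrm{Rm}>0$ on all of $M$ or a (local) de~Rham splitting off a line --- but the latter would make every blow-down limit carry at least two flat directions, whereas a shrinking quotient cylinder has exactly one. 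Hence $\mathrm{Ric}>0$ on $M$, so the hypotheses of the first main theorem of the paper are met with $K=\emptyset$; moreover $\nabla^2(-f)=\mathrm{Ric}>0$ shows $-f$ is a strictly convex proper exhaustion function, so by the Cheeger--Gromoll soul theorem $M$ is diffeomorphic to $\mathbb{R}^n$ and every regular level set of $f$ is diffeomorphic to $S^{n-1}$.

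Next I would feed the given split limit into the first main theorem. A shrinking quotient cylinder is a \emph{compact} split limit of \emph{type~I}; by the first main theorem this precludes any noncompact split limit, and the accompanying ``not simultaneously'' statement then precludes any compact split limit of type~II. Since by \cite[Proposition~1.3]{ZZ-4d} every subsequential blow-down limit splits as $N\times\mathbb{R}$, it follows that \emph{every} cross section $N$ occurring in a blow-down is a compact $(n-1)$-dimensional ancient $\kappa$-solution of type~I with $\mathrm{Rm}\ge0$. By Naber's characterization such an $N$ is a gradient shrinking Ricci soliton; a compact gradient shrinker with $\mathrm{Rm}\ge0$ is a product of round spheres, and since $N$ is a smooth Cheeger--Gromov limit of level sets of $f$ it is diffeomorphic to $S^{n-1}$, hence is the round $S^{n-1}$, and the normalization $R=1$ at the base point fixes its radius to be $\sqrt{(n-1)(n-2)}$. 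Thus every blow-down limit is this one round cylinder; a routine covering/contradiction-compactness argument upgrades this to a uniform rate of convergence, i.e.\ $(M,g)$ is asymptotically cylindrical.

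With $(M,g)$ now a $\kappa$-noncollapsed steady gradient Ricci soliton of dimension $n\ge4$ with positive sectional curvature (indeed positive curvature operator at every point) that is asymptotically cylindrical, Brendle's higher-dimensional rigidity theorem applies: $(M,g)$ is rotationally symmetric, hence a warped product $dr^2+\varphi(r)^2g_{S^{n-1}}$ whose profile $\varphi$ satisfies the Bryant ODE, which has a unique complete solution. Therefore $(M,g)$ is isometric to the $n$-dimensional Bryant soliton up to scaling.

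The real weight of the argument rests on the first main theorem itself (established earlier in the paper), which is what excludes the competing noncompact and type~II split limits. Given it, the two delicate points in the deduction above are: (i) showing that a compact type~I split limit must be the \emph{round} sphere --- here Naber's characterization of type~I ancient solutions, the classification of compact nonnegatively curved shrinkers, and the diffeomorphism type $S^{n-1}$ (needed to exclude reducible shrinkers such as products of spheres) all enter; and (ii) turning ``every blow-down limit is the round cylinder'' into the quantitative asymptotically-cylindrical hypothesis that Brendle's construction of approximate Killing fields requires, which needs a uniform decay estimate extracted from bounded curvature, $\kappa$-noncollapsing, and the absence of any other limit, possibly aided by an ODE-locking argument for $\varphi$ near infinity.
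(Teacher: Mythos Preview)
Your proposal is correct and follows essentially the same architecture as the paper: establish ${\rm Ric}>0$, invoke Theorem~\ref{noncompact-ancient-solution} and Corollary~\ref{notype1-2} to force every split limit to be compact of type~I, identify each such limit with the round $S^{n-1}$ using that the level sets of $f$ are diffeomorphic to $S^{n-1}$, upgrade to asymptotically cylindrical, and quote Brendle \cite{Bre-high}. The only deviations worth noting are that the paper obtains ${\rm Ric}>0$ via the parallel-vector-field argument of \cite{DZ-JEMS} rather than the strong maximum principle (your de~Rham splitting heuristic is correct in spirit but would need more care on how the flat factor interacts with the $\nabla f$ direction under the quotient), that with the paper's convention $\nabla^2 f={\rm Ric}$ it is $f$, not $-f$, that serves as the strictly convex exhaustion, and that the ``routine'' upgrade you flag as point~(ii) is handled in the paper by a direct citation of \cite[Lemma~6.5]{DZ-SCM}.
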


Theorem \ref{Zhao-Z-high-dim} is also an improvement  of Brendle \cite[Theorem 1.2]{Bre-high},  and Deng-Zhu  \cite[Theorem 1.3] {DZ-epsilon} and \cite[Lemma 6.5]{DZ-SCM}.  In  the above  papers,   one  shall assume that for  any sequence  of  $p_i\in M \rightarrow \infty$  the  rescaled flows $(M,g_{p_i}(t); p_i)$ of $(M,g)$ converge subsequently to a family of shrinking  cylinders. We note that the
nonnegativity condition of curvature operator can be weakened as the   nonnegativity  of sectional curvature  when $n=4$ \cite[Corollary 0.3]{ZZ-4d}.

Our  proof of Theorem \ref{Zhao-Z-high-dim} depends on the following classification of split ancient $\kappa$-solutions.

\begin{theo}\label{noncompact-ancient-solution}
Let $(M^n,g)$  $(n\ge 4)$ be a noncompact $\kappa$-noncollapsed steady Ricci soliton with $\rm{Rm}\geq 0$ and $\rm{Ric}> 0$ on  $M\setminus K$.  Suppose that there exists a sequence of rescaled Ricci flows $(M,g_{p_i}(t); p_i)$, which converges subsequently to a  splitting Ricci flow $(N\times \mathbb R, \bar h(t); p_\infty)$ as in (\ref{splitt-solution}) for some noncompact ancient $\kappa$-solution $(N, h(t))$.  Then there is no any compact  split limit Ricci flow of type I    arising from  the blow-down  of $(M, g)$.
\end{theo}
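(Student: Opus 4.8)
The plan is to argue by contradiction. Suppose that, besides the noncompact split limit $(N\times\mathbb R,\bar h(t);p_\infty)$ produced by $p_i\to\infty$, there is also a sequence $q_j\to\infty$ whose rescaled flows $(M,g_{q_j}(t);q_j)$ converge in the Cheeger--Gromov sense to a split limit $(N'\times\mathbb R,\bar h'(t);q_\infty)$ with $N'$ compact and the ancient flow $(N',\bar h'(t))$ of type I. Since ${\rm Ric}>0$ on $M\setminus K$ and $q_j\to\infty$, this limit has ${\rm Rm}\ge 0$; being compact and of type I it is a shrinking quotient $S^{n-1}/\Gamma$ of the round sphere, by the classification of compact type I ancient $\kappa$-solutions with nonnegative curvature operator. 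As $\kappa$-noncollapsing bounds $|\Gamma|$, only finitely many such quotients occur among all blow-down limits, so after passing to a subsequence $\Gamma$ is fixed and $N'\times\mathbb R$ is, up to scaling, the round cylindrical quotient $(S^{n-1}/\Gamma)\times\mathbb R$.

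Next I would use the standard structure of the steady soliton $(M,g;f)$ with ${\rm Ric}>0$ on $M\setminus K$: normalizing $R+|\nabla f|^2\equiv 1$, one has $R\to 0$ and $|\nabla f|\to 1$ at infinity, $f$ is proper with $f\to -\infty$ at infinity and concave outside $K$, and the $\mathbb R$-factor of any splitting blow-down limit is the $\nabla f/|\nabla f|$-direction. Hence from $(M,g_{q_j}(0);q_j)\to (S^{n-1}/\Gamma)\times\mathbb R$ one gets that, for $j$ large, $q_j$ is the center of an $\varepsilon_j$-neck $\mathcal N_j\subset M$ transverse to $\nabla f$, with round cross-section $S^{n-1}/\Gamma$ at scale $\sim R(q_j)^{-1/2}$, $\varepsilon_j\to 0$, and with neck foliation agreeing to leading order with the level sets of $f$.

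The heart of the proof is a neck-propagation step: once there is a sufficiently pinched round neck $\mathcal N_j$ transverse to $\nabla f$, the whole sublevel set $\{f<f(q_j)\}$---a neighborhood of infinity---is foliated by $\Psi(\varepsilon_j)$-necks with the same cross-section $S^{n-1}/\Gamma$, with $\Psi(\varepsilon_j)\to 0$. One runs the neck outward along the gradient flow of $f$: the cross-section cannot change type, since level sets of $f$ are connected and round sphere quotients of fixed curvature scale are isolated among $(n-1)$-dimensional ancient $\kappa$-solutions with ${\rm Rm}\ge 0$; the neck cannot cap off, since $M$ is noncompact; and it cannot fatten into some other, non-cylindrical ancient model, by the rigidity and improvement of necks in ancient solutions with ${\rm Rm}\ge 0$---the higher-dimensional counterpart of Brendle's neck-improvement estimates and of the $n=4$ argument in \cite{ZZ-4d}---together with backward uniqueness for the Ricci flow. (Equivalently this may be phrased as monotonicity, along the $-\nabla f$-flow, of a quantity measuring roundness of the rescaled level sets.)

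Finally, set $\delta_0:={\rm dist}_{\rm pGH}\big((S^{n-1}/\Gamma)\times\mathbb R,\,N\times\mathbb R\big)$, which is positive since $N$ is noncompact and non-flat (the limit has $R(p_\infty)=1$). Pick $j$ with $\Psi(\varepsilon_j)<\delta_0$, and then $i$ with $p_i\in\{f<f(q_j)\}$, possible because $p_i\to\infty$. By the propagation step $p_i$ is the center of a $\Psi(\varepsilon_j)$-neck with cross-section $S^{n-1}/\Gamma$, so $(M,g_{p_i}(t);p_i)$ is $\Psi(\varepsilon_j)$-close to $(S^{n-1}/\Gamma)\times\mathbb R$ on fixed compact sets; letting $i\to\infty$, the limit $N\times\mathbb R$ lies within $\Psi(\varepsilon_j)<\delta_0$ of $(S^{n-1}/\Gamma)\times\mathbb R$, contradicting the definition of $\delta_0$. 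This proves Theorem \ref{noncompact-ancient-solution}. The hard part is the neck-propagation step in general dimension under only ${\rm Rm}\ge 0$: unlike the $n=4$ case of \cite{ZZ-4d}, one must combine Brendle-type neck-improvement for the Ricci flow, compactness and a priori control of $\kappa$-solutions with ${\rm Rm}\ge 0$, and a backward-uniqueness argument preventing the neck from degenerating, all while tracking the alignment of neck axes with $\nabla f$.
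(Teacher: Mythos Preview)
Your proposal has a genuine gap at its core: the ``neck-propagation step'' is asserted rather than proved, and it is essentially the entire content of the theorem. You claim that once a single $\varepsilon_j$-neck modeled on $(S^{n-1}/\Gamma)\times\mathbb R$ appears transverse to $\nabla f$, the whole end is foliated by $\Psi(\varepsilon_j)$-necks of the same model with $\Psi(\varepsilon_j)\to 0$. The ingredients you list---isolation of the model among ancient $\kappa$-solutions, a ``higher-dimensional counterpart of Brendle's neck-improvement,'' backward uniqueness---do not assemble into such a statement. Neck-improvement theorems show that an already-round neck becomes rounder under the flow at a \emph{fixed} base point; they do not control how neck quality degrades as the base point moves along $\nabla f$, which is exactly the issue here. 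You exhibit no monotone quantity along the integral curves, and no propagation theorem of this strength is available under only ${\rm Rm}\ge 0$ in general dimension. There are also two subsidiary errors: a compact type~I ancient $\kappa$-solution with ${\rm Rm}\ge 0$ in dimension $n-1\ge 4$ need not be $S^{n-1}/\Gamma$---by Lemma~\ref{classification-type1} it is a product of shrinking compact symmetric spaces and round sphere quotients, e.g.\ $S^2\times S^{n-3}$; and in the paper's convention $f\to+\infty$ at infinity by (\ref{linear-f}), so $\{f<f(q_j)\}$ is compact, not a neighborhood of infinity.

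The paper's proof is entirely different and avoids any global propagation. It introduces the diameter function $F_\epsilon$ of (\ref{notation-f}) and works along a single integral curve $\Gamma$ of $\nabla f/|\nabla f|$. The type~I limit gives $F\le C$ on the level sets through $q_i$ (Proposition~\ref{compact-levelset-compact}), while the noncompact limit gives $F(p_i)>100C$; in between one finds points $p^1_i,p^2_i\in\Gamma$ with $F=10C$ and $F\le 10C$ on the arc joining them, so the blow-down at $p^1_i$ is compact of type~II. The decisive quantitative input is Lemma~\ref{type2-diameter}: for type~II one has $R_{\min}(t)\,{\rm Diam}(h(t))^2\to\infty$ as $t\to-\infty$, which forces the rescaled length $s^2_i-s^1_i\le A\,R(p^1_i)^{-1}$ (otherwise flowing back from $p^1_i$ lands inside the arc with $F>98C$, contradicting $F\le 10C$). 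But then $p^2_i$ lies within bounded rescaled distance of a type~I point, and the shrinking behavior (\ref{type1-scaled-diameter}) forces $F(p^2_i)\le 3C$, contradicting $F(p^2_i)=10C$. The argument thus pits the diameter blow-up of type~II against the diameter boundedness of type~I, rather than attempting to propagate a fixed neck model.
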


 Recall that
  a compact  ancient solution $(N, h(t))$ $(t\in (-\infty, 0])$  of type I means that it satisfies
         \begin{align*}
             \sup_{N\times (-\infty,0]} (-t)|R(x,t)|<\infty.
         \end{align*}
       Otherwise,   it is called type \uppercase\expandafter{\romannumeral2},  i.e.,   it satisfies
         \begin{align*}
             \sup_{M\times (-\infty,0]} (-t)|R(x,t)|=\infty.
         \end{align*}

By Theorem \ref{noncompact-ancient-solution}, we also prove

\begin{cor}\label{notype1-2}
Let $(M^n,g)$ $(n\ge 4)$  be a noncompact $\kappa$-noncollapsed steady Ricci soliton as  in Theorem \ref{noncompact-ancient-solution}. Then the compact  split limit ancient flows of type I and type II cannot occur simultaneously  from the  blow-down  of $(M, g)$.
\end{cor}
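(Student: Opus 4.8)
The plan is to argue by contradiction, treating Theorem \ref{noncompact-ancient-solution} as a black box: it suffices to show that the mere presence of a compact split limit of type II already forces the existence of a \emph{noncompact} split limit of $(M,g)$, for then Theorem \ref{noncompact-ancient-solution} rules out any coexisting compact split limit of type I, which is exactly the contradiction we want. So suppose, toward a contradiction, that the blow-down of $(M,g)$ produces both a compact split limit $(N_1,h_1(t))$ of type II and a compact split limit $(N_2,h_2(t))$ of type I.

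First I would extract a noncompact eternal solution out of $(N_1,h_1(t))$. Since $(N_1,h_1(t))$ ($t\in(-\infty,0]$) is a compact ancient $\kappa$-solution with $\mathrm{Rm}\ge 0$ which is of type II, the supremum of $(-t)|R|$ over $N_1\times(-\infty,0]$ is infinite, so Hamilton's point-picking procedure for type II ancient solutions yields spacetime points $(q_j,t_j)\in N_1\times(-\infty,0]$ along which the parabolically rescaled flows (normalized so that $R=1$ at the base point) converge in the Cheeger--Gromov sense to a complete non-flat eternal solution $(N_1^\infty,h_1^\infty(t))$, again $\kappa$-noncollapsed with $\mathrm{Rm}\ge 0$, whose scalar curvature attains its maximum at an interior spacetime point. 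By Hamilton's rigidity theorem for such eternal solutions, $(N_1^\infty,h_1^\infty(t))$ is a steady gradient Ricci soliton, and since it is non-flat (its scalar curvature is $1$ at the base point) it cannot be compact — a compact steady gradient Ricci soliton is Ricci-flat; alternatively, if $\mathrm{Ric}$ degenerates on $N_1^\infty$ then the strong maximum principle splits off a line, again forcing noncompactness. Hence $N_1^\infty$ is an $(n-1)$-dimensional noncompact ancient $\kappa$-solution. Because the $\mathbb R$-factor is flat, blowing up the $n$-dimensional product flow $(N_1\times\mathbb R,\ h_1(t)+ds^2)$ at the points $(q_j,0)$ — using the $\mathbb R$-translation symmetry to place the base times at $0$, which keeps the base points inside the compact set $N_1\times\{0\}$ — produces the product limit $(N_1^\infty\times\mathbb R,\ h_1^\infty(t)+ds^2)$.

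Next I would promote this to an honest blow-down split limit of $(M,g)$. By assumption $(N_1\times\mathbb R,\ h_1(t)+ds^2;\,p_\infty)$ is the Cheeger--Gromov limit of rescaled flows $(M,g_{p_i}(t);p_i)$ with $p_i\to\infty$ and $r_iR(p_i)=1$. Composing this rescaling with the further blow-up of the previous paragraph, and observing that the base points stay within bounded $g_{p_i}$-distance of $p_i$ while the composite scaling factor tends to $0$, a standard diagonal argument produces a sequence $p_i'\to\infty$ in $M$ with $r_i'R(p_i')=1$ such that $(M,g_{p_i'}(t);p_i')$ converges subsequently — by \cite[Proposition 1.3]{ZZ-4d} — to a splitting flow isometric to $(N_1^\infty\times\mathbb R,\ h_1^\infty(t)+ds^2)$. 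Writing this limit in the form (\ref{splitt-solution}) as $(N'\times\mathbb R,\bar h'(t))$, the $(n-1)$-dimensional factor $N'$ is noncompact, since $N_1^\infty$ is noncompact and non-flat. Thus $(M,g)$ admits a noncompact split limit flow.

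Finally, Theorem \ref{noncompact-ancient-solution} applies to $(M,g)$ verbatim and asserts that there is \emph{no} compact split limit Ricci flow of type I arising from the blow-down — contradicting the presence of $(N_2,h_2(t))$. This contradiction proves that compact split limit ancient flows of type I and of type II cannot occur simultaneously from the blow-down of $(M,g)$. The genuinely delicate point is the third paragraph: one must check that the iterated rescaling of $(M,g)$ reaching $N_1^\infty\times\mathbb R$ still has base points escaping to infinity in $M$ and still falls under Perelman's compactness theorem for ancient $\kappa$-solutions, so that the resulting limit is legitimately a blow-down split limit in the sense of this paper; the noncompactness of $N_1^\infty$, which ultimately rests on Hamilton's eternal-solution theorem together with the strong maximum principle, is the other spot that needs care.
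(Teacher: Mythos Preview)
Your overall strategy---manufacture a noncompact split limit and then invoke Theorem \ref{noncompact-ancient-solution}---is exactly the paper's, but the paper reaches the noncompact limit by a much shorter route. It never does Hamilton point-picking inside $(N_1,h_1)$ or appeals to the eternal-solution rigidity theorem; instead it argues that if every split limit had uniformly bounded diameter, Proposition \ref{compact-limit-ancient-solution} would force all of them to be of type I, contradicting the assumed type II limit, and hence $\limsup_{\epsilon\to 0}\limsup_{p\to\infty}F_\epsilon(p)=\infty$, which immediately yields a sequence $q_i\to\infty$ whose split limit is noncompact. That is a two-line reduction using results already in hand. Your route is more self-contained in that it does not call Proposition \ref{compact-limit-ancient-solution}, but it imports more external machinery and leaves the diagonal step only sketched: the phrase ``$\mathbb R$-translation symmetry to place the base times at $0$'' is garbled (the $\mathbb R$-factor is spatial, not temporal), and the honest way to recast the iterated blow-up in the paper's normalization $g_p(t)=R(p)\,g(R(p)^{-1}t)$ is to use the soliton self-similarity $g(\cdot,s)=\phi_s^\ast g$ to convert the time shift $t_j$ into the diffeomorphism $\phi_{r_{i_j}t_j}$, after which one must still check that the new base points $\tilde p_j=\phi_{r_{i_j}t_j}(\hat p_j)$ escape to infinity (they do, since Hamilton's trace Harnack on $h_1$ gives $Q_j\le C$, whence $R(\tilde p_j)\approx Q_j R(p_{i_j})\to 0$ by Lemma \ref{compact-R-decay}). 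With that repair your argument is correct, just longer than necessary.
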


 By Theorem \ref{noncompact-ancient-solution} and Corollary \ref{notype1-2}, we conclude   that for any  noncompact $\kappa$-noncollapsed steady Ricci soliton with $\rm{Rm}\geq 0$ and $\rm{Ric}> 0$ on $M\setminus K$,   either all split limit flows $(N, h(t))$ as in (\ref{splitt-solution})  are compact and of type I,  or there exists  at least one  noncompact split limit flow.  The conclusion  can be regarded as a generalization of Chow-Deng-Ma's result \cite[Theorem 1.3, Claim 6.4]{CDM} from  dimension $4$ to any dimension.

Compared to the proof of  \cite[Theorem  0.2]{ZZ-4d} for the $4d$  steady Ricci soliton, we shall modify the argument  for  one of Theorem \ref{noncompact-ancient-solution} in the case of  higher dimensions,    since we have no classification result for codimemsional one  compact ancient $\kappa$-solutions of  type II as  dimension $3$ \cite{Bre-3d-noncpt, ABDS, BK}. We will get a distance estimate between two compact level sets, each of which has  a large diameter,  to see  (\ref{sequence-bound}). All main results  will be proved in Section 3.

    \section{Preliminaries}

    In this section, we  review  some results proved  in our previous  article  \cite{ZZ-4d}, which will be also used in this paper.  As in  \cite{ZZ-4d}, we will always assume that
    $(M^n, g; f )$  $(n\ge 4)$ is  a noncompact $\kappa$-noncollapsed  steady  gradient Ricci soliton with curvature operator
    $\rm{Rm}\geq 0$  on $M\setminus K$.

    The following result can be regarded as a Harnack  type estimate for  steady Ricci solitons.

    \begin{lem}(\cite[Lemma 1.3]{ZZ-4d}) \label{compact-curvature-bound}
       Let $(M^n,g)$ be a complete noncompact $\kappa$-noncollapsed steady Ricci soliton  with $\rm{Rm}\geq 0$ on $M\setminus K$.  Let $\{p_i\} \to \infty$ be a sequence in $(M, g)$. Then, for any $q_i \in B_{g_{p_i}}(p_i, D)$, there exists $C_0(D) > 0$ such that
       \begin{align}\label{curvature-control}
           C_0^{-1}R(p_i) \leq R(q_i) \leq C_0 R(p_i).
       \end{align}
   \end{lem}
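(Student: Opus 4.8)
The plan is to reduce the statement to a compactness argument built on Perelman's compactness theorem for ancient $\kappa$-solutions, cited here as \cite[Proposition 1.3]{ZZ-4d}. First I would normalize: consider the rescaled Ricci flows $(M, g_{p_i}(t); p_i)$ with $g_{p_i}(t) = R(p_i)\, g(\cdot, R(p_i)^{-1}t)$, so that $R_{g_{p_i}}(p_i, 0) = 1$. Since $(M,g)$ is a $\kappa$-noncollapsed steady soliton with $\mathrm{Rm}\geq 0$ outside the compact set $K$, and since $p_i \to \infty$, for $i$ large the relevant geodesic balls lie in $M\setminus K$; the induced ancient flow then has the curvature and noncollapsing hypotheses needed so that the pointed sequence $(M, g_{p_i}(t); p_i)$ subconverges in the Cheeger--Gromov sense to a limiting ancient $\kappa$-solution $(M_\infty, g_\infty(t); p_\infty)$ with bounded nonnegative curvature. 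In particular $R_{g_\infty}(p_\infty, 0) = 1$ by the normalization, and $R_{g_\infty}(\cdot, 0)$ is a smooth, strictly positive function on $M_\infty$ (positivity of scalar curvature for nonflat ancient $\kappa$-solutions with $\mathrm{Rm}\geq 0$ follows from the strong maximum principle and the fact that $R>0$ somewhere).

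Next I would argue by contradiction. Fix $D>0$ and suppose the conclusion fails: then along a subsequence there are points $q_i \in B_{g_{p_i}}(p_i, D)$ with $R_{g_{p_i}}(q_i,0)\to 0$ or $R_{g_{p_i}}(q_i,0)\to \infty$ (these being the normalized versions of $R(q_i)/R(p_i)$). Passing to the Cheeger--Gromov limit, the basepoints $q_i$ converge to some $q_\infty \in \overline{B_{g_\infty}(p_\infty, D)}$, and by smooth convergence $R_{g_\infty}(q_\infty, 0) = \lim R_{g_{p_i}}(q_i,0)$. If that limit is $0$, this contradicts the strict positivity of scalar curvature on the limit $\kappa$-solution; if that limit is $+\infty$, it contradicts the fact that $R_{g_\infty}(\cdot,0)$ is finite (indeed bounded) on the compact set $\overline{B_{g_\infty}(p_\infty,D)}$. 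Either way we reach a contradiction, so there is $C_0(D)>0$, depending only on $D$ (and on the soliton, $\kappa$, $n$), with $C_0^{-1} \le R_{g_{p_i}}(q_i,0) \le C_0$ for all $i$ large and all $q_i \in B_{g_{p_i}}(p_i,D)$, which unwinds to \eqref{curvature-control}. One should note that the finitely many small $i$ are harmless: for each such $i$ the ball is a fixed compact set on which $R$ is bounded above and below by positive constants, so enlarging $C_0$ absorbs them.

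The main obstacle is making sure the compactness input applies uniformly: one needs that the diameter-$D$ balls $B_{g_{p_i}}(p_i,D)$ eventually avoid $K$ (so that $\mathrm{Rm}\geq 0$ genuinely holds there), and one needs the uniform curvature bound at the basepoint that Perelman-type compactness requires — here that bound is exactly the normalization $R_{g_{p_i}}(p_i,0)=1$ together with the Harnack/derivative-of-curvature estimates for ancient $\kappa$-solutions, which give uniform local bounds on $R$ and its derivatives on balls of fixed radius. Since this is precisely the setting of \cite[Proposition 1.3]{ZZ-4d}, the technical content is already packaged there; the only thing to verify carefully is that the limit $\kappa$-solution is nonflat — but this follows since $R_{g_\infty}(p_\infty,0)=1>0$ — and that the convergence is in $C^\infty_{\mathrm{loc}}$ in space-time so that pointwise evaluation of $R$ at the moved basepoints $q_i$ passes to the limit. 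A subtlety worth a sentence: $q_i$ need not converge on the nose, but after a further subsequence it does, and since $D$ is fixed the argument can be run for each $D$ separately, yielding $C_0 = C_0(D)$ as claimed.
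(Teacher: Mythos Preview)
The present paper does not supply its own proof of this lemma; it is simply quoted from \cite[Lemma~1.3]{ZZ-4d} as a preliminary, so there is nothing here to compare your argument against.  The question is only whether your argument stands on its own.

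Your compactness--plus--contradiction strategy is natural, but as written it risks circularity.  You extract a smooth subsequential limit by invoking \cite[Proposition~1.3]{ZZ-4d}, yet any application of Hamilton's compactness theorem to the rescaled flows $(M,g_{p_i}(t);p_i)$ already requires uniform bounds on $|\mathrm{Rm}_{g_{p_i}}|$ over balls $B_{g_{p_i}}(p_i,D)$, and with $\mathrm{Rm}\ge 0$ that is exactly the upper half of \eqref{curvature-control}.  Your parenthetical that the needed local bounds come from ``Harnack/derivative-of-curvature estimates for ancient $\kappa$-solutions'' is where the issue hides: if that phrase refers to the estimate being proved, you are assuming the conclusion, and if it refers to \cite[Proposition~1.3]{ZZ-4d}, you must check that its proof in \cite{ZZ-4d} does not itself rest on the present lemma (in standard developments the Harnack bound is an \emph{input} to the splitting/compactness statement, not a consequence of it).

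The repair is to appeal to the more primitive ingredients directly.  For the upper bound one should invoke Perelman's point-picking argument giving bounded curvature at bounded distance for ancient $\kappa$-solutions with nonnegative curvature operator; this uses only $\kappa$-noncollapsing, $\mathrm{Rm}\ge 0$, and the normalization $R_{g_{p_i}}(p_i,0)=1$, and does not presuppose \eqref{curvature-control}.  With that in hand Hamilton's compactness applies, and your lower-bound argument (nonflatness of the limit from $R_{g_\infty}(p_\infty,0)=1$, then $R_{g_\infty}>0$ everywhere by the strong maximum principle for $\partial_t R=\Delta R+2|\mathrm{Ric}|^2$) goes through.  Once you make this substitution the outline becomes a correct proof.
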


    \subsection{A decay estimate of curvature}

    By   \cite[Proposition 1.3]{ZZ-4d}, for any sequence  $\{p_i\} \to \infty$,
  rescaled  Ricci flows  $(M, g_{p_i}(t); p_i)$   converge subsequently to a  splitting  Ricci flow $(N \times \mathbb{R}, \bar {g}(t); p_\infty)$ in the Cheeger-Gromov sense, where
$ \bar {g}(t)= h(t)+dr^2$  as in
 (\ref{splitt-solution}).  We assume that the ancient   $\kappa$-solution $(N, h(t))$ is compact. Namely,  there is a constant $C$ such that
    \begin{align}\label{bound-h}
       \text{Diam}(N, h(0)) \leq C.
   \end{align}
   Then we have the following curvature decay estimate.

   \begin{lem}(\cite[Lemma 2.2]{ZZ-4d})\label{compact-R-decay}
       Let $(M^n, g)$ be a complete noncompact  steady Ricci soliton with $\text{Ric} > 0$ and  $\rm{Rm}\geq 0$ on $M\setminus K$.  Suppose that there exists a sequence of $p_i \rightarrow \infty$ such that the split $(n-1)$-dimensional ancient $\kappa$-solution $(N, {h}(t))$  satisfies (\ref{bound-h}). Then the scalar curvature  of $(M, g)$ decays to zero uniformly. Namely,
       \begin{align}\label{R-decay}
           \lim_{x \rightarrow \infty}R(x) = 0.
       \end{align}
   \end{lem}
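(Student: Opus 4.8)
The statement asserts that the scalar curvature $R(x)$ of the steady soliton decays to zero uniformly at infinity, under the hypothesis that some blow-down limit at basepoints $p_i\to\infty$ splits off an $\mathbb{R}$-factor with a \emph{compact} $(n-1)$-dimensional ancient $\kappa$-solution factor $(N,h(t))$ satisfying the diameter bound (\ref{bound-h}).

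The plan is to argue by contradiction. Suppose $R$ does not decay uniformly; then there is $\delta>0$ and a sequence $x_j\to\infty$ with $R(x_j)\ge\delta$. Since $R$ attains its maximum $R_{\max}$ on $M$ (a standard fact for steady solitons with $\mathrm{Rm}\ge 0$, using $\Delta R = -2|\mathrm{Ric}|^2 + \langle\nabla f,\nabla R\rangle$ and the soliton identity $R+|\nabla f|^2=\mathrm{const}$), we have $0<\delta\le R(x_j)\le R_{\max}<\infty$. The idea is that this lower bound on curvature, combined with the known splitting behavior at \emph{every} sequence going to infinity (from \cite[Proposition 1.3]{ZZ-4d}), forces the geometry near each $x_j$ to look like $N\times\mathbb{R}$ after rescaling by $R(x_j)\asymp 1$ — but then, chaining these local splittings along the integral curve of $\nabla f$ emanating from the $x_j$'s, one would produce a genuine line in $M$ along which the geometry is, in the limit, a product; by the Cheeger–Gromoll splitting theorem applied to the (Cheeger–Gromov limit of the) soliton, $M$ itself splits off a line. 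A steady soliton that splits off a line must be a product $\mathbb{R}\times M'$ with $M'$ a steady soliton of one lower dimension; iterating or invoking the structure of such products contradicts either $\kappa$-noncollapsing or the hypothesis $\mathrm{Ric}>0$ on $M\setminus K$ (the product direction is Ricci-flat everywhere, so $\mathrm{Ric}$ degenerates even outside $K$).

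Concretely, the key steps are: (i) record that $R\le R_{\max}$ globally and that along any integral curve $\gamma(s)$ of $\nabla f/|\nabla f|$ one has $\frac{d}{ds}f = |\nabla f|\to \sqrt{R_{\max}}$, so $f$ is a proper exhaustion and level sets of $f$ escape to infinity; (ii) using Lemma \ref{compact-curvature-bound}, on each ball $B_{g_{p_i}}(p_i,D)$ the curvature is comparable to $R(p_i)$, so the rescaled flows have uniformly bounded curvature on bounded sets and Perelman's compactness (via \cite[Proposition 1.3]{ZZ-4d}) gives the split limit $N\times\mathbb{R}$ with $(N,h(t))$ ancient $\kappa$-solution; (iii) if $R(x_j)\ge\delta$ for a sequence $x_j\to\infty$, normalize so that $R(x_j)R(p_i)^{-1}$ stays bounded away from $0$ and $\infty$ and extract a limit; the compactness of $(N,h(0))$ together with (\ref{bound-h}) means the $N$-directions have bounded diameter, while the splitting produces an $\mathbb{R}$-line — now move the basepoint along $\gamma$ and observe the $\mathbb{R}$-factors glue, so in the Cheeger–Gromov limit we get a metric containing a line; (iv) apply Cheeger–Gromoll to split that limit, deduce it is Ricci-flat in the line direction, and trace this back to contradict $\mathrm{Ric}>0$ on $M\setminus K$ (for points on $\gamma$ far enough out, which lie outside $K$). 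Therefore $R(x)\to 0$.

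The main obstacle is step (iii)–(iv): making rigorous that the $\mathbb{R}$-directions at successive basepoints along $\gamma$ are "the same" line and survive to the Cheeger–Gromov limit, rather than twisting. This is where the lower curvature bound $R(x_j)\ge\delta$ is essential — it prevents the rescaling scale from running off, so the splitting directions at nearby basepoints are forced to be nearly parallel (quantitatively, by the curvature comparison in Lemma \ref{compact-curvature-bound} the rescaled metrics at $p_i$ and at a point $D$-away agree up to controlled distortion), and one can run a continuity/connectedness argument along $\gamma$ to propagate the splitting over an arbitrarily long segment, hence in the limit over a full line. I also expect to need the uniqueness part of the splitting (that the limit is $N\times\mathbb{R}$ with $N$ an ancient $\kappa$-solution, not some other product) to pin down that it is exactly one $\mathbb{R}$-factor, so the Cheeger–Gromoll step is unambiguous.
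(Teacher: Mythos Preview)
Your plan has a genuine gap at the contradiction step (iv). You want the split limit $N\times\mathbb R$ (which has $\mathrm{Ric}=0$ in the $\mathbb R$-direction) to ``trace back'' to a violation of $\mathrm{Ric}>0$ on $M\setminus K$, but this inference is invalid: the hypothesis is strict positivity pointwise, not a uniform lower bound, so the smallest Ricci eigenvalue may well tend to zero along a divergent sequence while remaining positive at every point of $M\setminus K$. A degenerate Ricci direction in a pointed Cheeger--Gromov limit yields no point of $M$ with degenerate Ricci. Your alternative reading --- that $M$ itself splits --- is likewise unjustified: Cheeger--Gromoll applied to the limit splits only the limit, and your ``chaining'' of local $\mathbb R$-factors along an integral curve does not produce an honest line in $M$ (and is in any case redundant, since every blow-down limit already carries an $\mathbb R$-factor by \cite[Proposition 1.3]{ZZ-4d}). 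Step (iii) also conflates the given sequence $p_i$ with the hypothetical bad sequence $x_j$: you cannot arrange $R(x_j)R(p_i)^{-1}$ to be bounded, since the lemma will force $R(p_i)\to 0$ while $R(x_j)\ge\delta$.

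The argument in \cite[Lemma 2.2]{ZZ-4d} runs in two stages and locates the contradiction elsewhere. First one shows $R(p_i)\to 0$: otherwise, on a subsequence with $R(p_i)\ge\delta$, the rescaling is bounded and the potential $f-f(p_i)$ (which is $R(p_i)^{-1/2}$-Lipschitz in $g_{p_i}$, hence uniformly Lipschitz) passes to the limit, so $(N\times\mathbb R,\bar g(0))$ is itself a steady gradient soliton; the $\mathbb R$-factor then forces $(N,h(0))$ to be a \emph{compact} steady soliton, hence Ricci-flat, hence (with $\mathrm{Rm}\ge 0$) flat --- contradicting that $(N,h(t))$ is a non-flat ancient $\kappa$-solution. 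Second, one propagates to all of $M$ via level sets: compactness of $N$ traps $\Sigma_{f(p_i)}=f^{-1}(f(p_i))$ inside a bounded $g_{p_i}$-ball, so Lemma \ref{compact-curvature-bound} gives $R\le C_0 R(p_i)$ on all of $\Sigma_{f(p_i)}$; the monotonicity $\langle\nabla R,\nabla f\rangle=-2\,\mathrm{Ric}(\nabla f,\nabla f)\le 0$ then bounds $R(x)\le C_0 R(p_i)\to 0$ for every $x$ with $f(x)\ge f(p_i)$. The contradiction you should be aiming for is with the non-flatness of $N$, not with $\mathrm{Ric}>0$ on $M$.
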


   By  Lemma \ref{compact-R-decay} and  the normalization  identity
    \begin{align}\label{scalar-equ}
        R + |\nabla f|^2 = 1,
    \end{align}
  we have
    \begin{align}\label{gradient-esti}
     |\nabla f(x)|\to 1~{\rm as}~ \rho(x)\to\infty.
     \end{align}
      Moreover, by \cite[Lemma 2.2]{DZ-SCM} (or \cite[Theorem 2.1]{CDM}), $f$ satisfies
       \begin{align}\label{linear-f}
           c_1\rho(x)\le f(x)\le c_2\rho(x)
       \end{align}
       for two constants $c_1$ and $c_2$. Hence, the integral curve $\gamma(s)$ generated by $X=\nabla f$ extends to the infinity as $s\rightarrow \infty$.

 Based on  Lemma \ref{compact-curvature-bound} and
  Lemma \ref{compact-R-decay},   by studying  the level set geometry of $(M, g)$,  we prove

   \begin{prop}(\cite[Proposition  2.7]{ZZ-4d})\label{compact-levelset-compact}
       Let $(M, g)$ be the steady Ricci soliton as  in Lemma \ref{compact-R-decay} and $(N\times \mathbb{R},h(t)+ds^2;p_\infty)$ the splitting limit flow of $(M,g_{p_i}(t);p_i)$, which satisfies (\ref{bound-h}). Then there exists $C_0(C)>0$ such that for any  sequence of $q_i\in  f^{-1} (f(p_i))$ the splitting limit flow  $( h'(t)+ds^2, N'\times \mathbb{R};  q_\infty)$ of rescaled flows $(M,g_{q_i}(t); q_i)$ satisfies
       \begin{align}\label{diam-estimate}
           {\rm Diam}(h'(0))\leq C_0.
       \end{align}
   \end{prop}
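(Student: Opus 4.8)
The plan is to transfer the diameter bound for the split limit first from $p_i$ to the whole level set $\Sigma_{c_i}:=f^{-1}(c_i)$, $c_i=f(p_i)$, and then to an arbitrary $q_i\in\Sigma_{c_i}$ by the Harnack inequality of Lemma~\ref{compact-curvature-bound}. By the properness of $f$ coming from (\ref{linear-f}) and the compactness of $K$, for $i$ large $\Sigma_{c_i}\subset M\setminus K$ is compact, and it is connected because a noncompact steady soliton with ${\rm Ric}>0$ outside a compact set has a single end (cf.\ \cite{CDM,DZ-SCM}). The heart of the proof is the estimate
\[
{\rm Diam}\big(\Sigma_{c_i},\,g_{p_i}(0)\big)\ \le\ C'(C)\qquad\text{for all }i\gg1 .
\]

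To prove this, introduce the rescaled, recentred potential $\hat f_i:=R(p_i)^{1/2}(f-c_i)$, so that $\hat f_i(p_i)=0$. A constant rescaling does not change the Levi-Civita connection, so $\nabla^2_{g_{p_i}(0)}\hat f_i=R(p_i)^{1/2}\nabla^2_g f=-R(p_i)^{1/2}{\rm Ric}_g$ by the soliton equation, while $|\nabla\hat f_i|_{g_{p_i}(0)}=|\nabla f|_g=(1-R)^{1/2}$ by (\ref{scalar-equ}). Since ${\rm Rm}\ge0$ gives $0\le{\rm Ric}\le R\,g$, the Harnack bound (\ref{curvature-control}) together with Lemma~\ref{compact-R-decay} shows that on every ball $B_{g_{p_i}(0)}(p_i,D)$ one has $|\nabla^2_{g_{p_i}(0)}\hat f_i|\le C(D)\,R(p_i)^{1/2}\to0$ and $|\nabla\hat f_i|\to1$ as $i\to\infty$. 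Hence, in the splitting limit $(N\times\mathbb R,\,h(0)+ds^2;\,p_\infty)$ of the hypothesis, the $\hat f_i$ converge in $C^\infty_{loc}$ to a function $\hat f_\infty$ with $\nabla^2\hat f_\infty=0$, $|\nabla\hat f_\infty|\equiv1$ and $\hat f_\infty(p_\infty)=0$; since $N$ is compact and $\hat f_\infty$ has vanishing Hessian, $\hat f_\infty$ is constant on each slice $N\times\{s\}$, so $\hat f_\infty(x,s)=s$ after choosing the line coordinate suitably. Therefore $\Sigma_{c_i}=\hat f_i^{-1}(0)$ converges to the totally geodesic slice $N\times\{0\}$, which is compact of diameter $\le C$. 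Set $R_0:=C+1$. Then $N\times\{0\}$ lies at $g_{p_i}(0)$-distance $\ge1$ from $\partial B_{g_{p_i}(0)}(p_i,R_0)$, so for $i$ large $\Sigma_{c_i}$ misses the annulus $B_{g_{p_i}(0)}(p_i,R_0)\setminus B_{g_{p_i}(0)}(p_i,R_0-1)$; being connected and containing $p_i$, $\Sigma_{c_i}$ must lie inside $B_{g_{p_i}(0)}(p_i,R_0)$, which gives the displayed bound with $C'=2R_0$.

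Now let $q_i\in\Sigma_{c_i}$ be arbitrary. By the bound just proved $q_i\in B_{g_{p_i}(0)}(p_i,C')$, so Lemma~\ref{compact-curvature-bound} with $D=C'$ gives $C_0^{-1}\le R(q_i)/R(p_i)\le C_0$ for some $C_0=C_0(C')$. As $g_{q_i}(0)=\big(R(q_i)/R(p_i)\big)g_{p_i}(0)$, this yields ${\rm Diam}(\Sigma_{c_i},g_{q_i}(0))\le C_0^{1/2}C'$. Running the argument of the previous paragraph at $q_i$ in place of $p_i$ (the required curvature bounds on $g_{q_i}(0)$-balls again follow from (\ref{curvature-control}), now for the sequence $q_i\to\infty$), the recentred potential $R(q_i)^{1/2}(f-c_i)$ converges, in the splitting limit $(N'\times\mathbb R,\,h'(0)+ds^2;\,q_\infty)$ of $(M,g_{q_i}(t);q_i)$, to an affine function of unit gradient whose zero set is a closed totally geodesic hypersurface isometric to $(N',h'(0))$, and to which $\Sigma_{c_i}$ converges. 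Since ${\rm Diam}(\Sigma_{c_i},g_{q_i}(0))\le C_0^{1/2}C'$ uniformly, that hypersurface — hence $(N',h'(0))$ — is compact, and
\[
{\rm Diam}(h'(0))\ =\ \lim_i{\rm Diam}\big(\Sigma_{c_i},g_{q_i}(0)\big)\ \le\ C_0^{1/2}C'\ =:\ C_0(C),
\]
which is the assertion.

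The delicate point is the trapping step in the second paragraph: one has to exclude that $\Sigma_{c_i}$, although modelled near $p_i$ on the compact slice $N\times\{0\}$, nevertheless spreads out to $g_{p_i}(0)$-distances tending to infinity. This is precisely where the connectedness of the level set — equivalently the one-endedness of $(M,g)$ — and the fact that the limiting cross-section is a closed, boundaryless hypersurface enter decisively; the remainder is bookkeeping with the scaling of the metric and with the Harnack inequality.
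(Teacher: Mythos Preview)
The present paper does not actually contain a proof of this proposition; it is quoted verbatim from \cite[Proposition~2.7]{ZZ-4d}. Your argument is essentially correct and, judging from the paper's later appeal to \cite[Lemma~2.6]{ZZ-4d} (convergence of level sets to the cross-section $N\times\{0\}$), it follows the same strategy as the original: pass the rescaled potential to the limit to identify $\Sigma_{c_i}$ with the slice, use connectedness of $\Sigma_{c_i}$ (equivalently one-endedness of $M$) to trap the entire level set in a ball of controlled $g_{p_i}(0)$-radius, and then propagate the bound along the level set by the Harnack estimate~(\ref{curvature-control}).

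Two small points are worth tightening. First, in the trapping step your annulus $B_{g_{p_i}(0)}(p_i,R_0)\setminus B_{g_{p_i}(0)}(p_i,R_0-1)$ with $R_0=C+1$ may graze $N\times\{0\}$ at its inner boundary, since ${\rm Diam}(N,h(0))\le C$ only gives $N\times\{0\}\subset\bar B(p_\infty,C)$; take $R_0=C+2$, or simply argue on the sphere $\partial B(p_\infty,C+1)$, where $|\hat f_\infty|=|s|\ge\sqrt{2C+1}>0$. Second, at the $q_i$ stage you assert that the zero set of the limiting affine function is a slice isometric to $(N',h'(0))$. This does not follow from $\nabla^2\hat f'_\infty=0$ and $|\nabla\hat f'_\infty|=1$ alone when $N'$ is not yet known to be compact, since a noncompact $N'$ could carry a nonzero parallel vector field and the zero set need not be a slice. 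One should add that the $\mathbb R$-factor in the splitting limit is, by construction, generated precisely by the limit of $R(q_i)^{-1/2}\nabla f$ (cf.\ the proof of \cite[Lemma~4.6]{DZ-JEMS}, which the present paper also invokes), so that $\nabla\hat f'_\infty=\partial_{s'}$ and hence $\hat f'_\infty=s'$ after fixing the sign and the additive constant. With these adjustments the argument is complete.
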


   \subsection{A classification of split compact ancient solutions}

In  case that   all  split   ancient $\kappa$-solution $(N, h(t))$  satisfies $(\ref{bound-h})$,  we can  classify  $(N, h(t))$.

  \begin{prop}(\cite[Proposition  4.1]{ZZ-4d})\label{compact-limit-ancient-solution}
      Let $(M, g)$ be a  steady Ricci soliton as  in Lemma \ref{compact-R-decay}.   Suppose that  there is a uniform constant $C$  such that   all  split   ancient $\kappa$-solution $(N, h(t))$  satisfies $(\ref{bound-h})$.
          Then  every $h(t)$ must be an  ancient $\kappa$-solution of type \uppercase\expandafter{\romannumeral1}.
 \end{prop}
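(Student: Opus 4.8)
\noindent\emph{Proof plan.} We argue by contradiction. Fix one split limit flow $(N,h(t))$ obtained from a sequence $p_i\to\infty$, so that $(M,g_{p_i}(t);p_i)\to(N\times\mathbb R,\bar h(t);p_\infty)$ with $\bar h(t)=h(t)+ds^2$, and suppose that $(N,h(t))$ is \emph{not} of type \uppercase\expandafter{\romannumeral1}, i.e.\ $\sup_{N\times(-\infty,0]}(-t)\,|R_{h(t)}|=\infty$. The heart of the matter is to upgrade the diameter bound $(\ref{bound-h})$, which a priori controls only the slice $t=0$, to a scale-invariant estimate for $(N,h(t))$ valid for \emph{all} $t\le 0$; once this is in hand, a type \uppercase\expandafter{\romannumeral2} point-picking at times $t\to-\infty$ produces a non-flat \emph{compact} steady gradient Ricci soliton, which is impossible.

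\emph{Step 1: a uniform scale-invariant diameter bound.} Fix $t<0$ and set $w_i:=\phi_{r_it}(p_i)$. Since $\frac{d}{ds}f(\phi_s(p_i))=-|\nabla f|^2\le 0$ and $r_it\le 0$, we have $f(w_i)\ge f(p_i)\to\infty$, hence $w_i\to\infty$ by $(\ref{linear-f})$. Because $\phi_{r_it}$ is an isometry of $(M,g_{p_i}(t))$ onto $(M,R(p_i)g)$ sending $p_i$ to $w_i$, it follows that $(M,R(p_i)g;w_i)\to(N\times\mathbb R,\bar h(t);p_\infty)$. On the other hand $R(w_i)g$ is exactly the blow-down metric $g_{w_i}(0)$ at $w_i$, so by the hypothesis of the proposition (applied to the sequence $w_i\to\infty$) together with \cite[Proposition 1.3]{ZZ-4d}, after passing to a subsequence $(M,R(w_i)g;w_i)$ converges to a splitting limit $\widetilde N\times\mathbb R$ with $\widetilde N$ compact and $\mathrm{Diam}(\widetilde N,\widetilde h(0))\le C$. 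These two sequences of pointed metrics differ only by the scalar factor $R(p_i)/R(w_i)=\big(R_{g_{p_i}(t)}(p_i)\big)^{-1}$, which converges to $\beta_t^{-1}$, where $\beta_t:=R_{\bar h(t)}(p_\infty)\in(0,\infty)$ (finite because $\bar h$ is a smooth flow, positive because $\bar h$ is a non-flat ancient $\kappa$-solution, so $R_{\bar h}>0$ everywhere). Hence $\bar h(t)$ is a homothety of $\widetilde h(0)+ds^2$, $(N,h(t))$ is isometric to $(\widetilde N,\beta_t^{-1}\widetilde h(0))$, and the scale-invariant quantity is preserved:
\begin{align*}
\mathrm{Diam}(N,h(t))\cdot\Big(\max_N R_{h(t)}\Big)^{1/2}
&=\mathrm{Diam}(\widetilde N,\widetilde h(0))\cdot\Big(\max_{\widetilde N}R_{\widetilde h(0)}\Big)^{1/2}\\
&\le C\sqrt{C_0(C)}=:C_1,
\end{align*}
where Lemma \ref{compact-curvature-bound} was used to bound $\max_{\widetilde N}R_{\widetilde h(0)}\le C_0(C)$ (each point of $\widetilde N$ lies in a bounded rescaled ball about the base point, where $R=1$). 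For $t=0$ the same bound follows from $(\ref{bound-h})$ and Lemma \ref{compact-curvature-bound}. Thus $\mathrm{Diam}(N,h(t))\le C_1\big(\max_N R_{h(t)}\big)^{-1/2}$ for every $t\le 0$.

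\emph{Step 2: producing a compact steady soliton.} Since $(N,h(t))$ is not of type \uppercase\expandafter{\romannumeral1}, Hamilton's point-picking for ancient solutions yields $t_k\to-\infty$, $x_k\in N$ and $Q_k:=R_{h(t_k)}(x_k)$ such that the parabolic rescalings $\widehat h_k(s):=Q_k\,h\big(t_k+Q_k^{-1}s\big)$ subconverge, in $C^{\infty}_{\rm loc}$ and pointed at $(x_k,0)$, to a complete \emph{eternal} Ricci flow $\widehat h_\infty(s)$, $s\in(-\infty,\infty)$, with $\mathrm{Rm}\ge 0$ (inherited), bounded scalar curvature, $R_{\widehat h_\infty}(x_\infty,0)=1$, and $R_{\widehat h_\infty}$ attaining its space-time supremum; here the $\kappa$-noncollapsing of $(M,g)$ and the curvature bound from the point-picking are used. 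By Hamilton's theorem on eternal solutions, $\widehat h_\infty$ is a steady gradient Ricci soliton, non-flat because $R_{\widehat h_\infty}(x_\infty,0)=1$. But Step 1 forces
\begin{align*}
\mathrm{Diam}\big(\widehat h_k(0)\big)=\sqrt{Q_k}\,\mathrm{Diam}\big(h(t_k)\big)\le C_1\Big(\frac{Q_k}{\max_N R_{h(t_k)}}\Big)^{1/2}\le C_1
\end{align*}
(as $Q_k\le\max_N R_{h(t_k)}$), so $\widehat h_\infty$ lives on a \emph{compact} manifold of diameter $\le C_1$. A compact steady gradient Ricci soliton satisfies $\int R=0$ (trace the soliton identity and integrate), and with $\mathrm{Rm}\ge 0$ this forces $R\equiv 0$, contradicting $R_{\widehat h_\infty}(x_\infty,0)=1$. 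Therefore every split limit flow $h(t)$ is of type \uppercase\expandafter{\romannumeral1}.

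\emph{Main difficulty.} The decisive point is Step 1: one must see that for $t<0$ the time slice $(N,h(t))$ of the split limit is itself a homothety of the $t=0$ slice of a blow-down of $(M,g)$ at the sequence $w_i=\phi_{r_it}(p_i)\to\infty$ (this is how the compactness hypothesis $(\ref{bound-h})$ enters at every time), and the matching of scales hinges on the convergence $R(w_i)/R(p_i)=R_{g_{p_i}(t)}(p_i)\to\beta_t\in(0,\infty)$. Given the uniform scale-invariant diameter bound, Step 2 is a routine point-picking argument combined with the non-existence of non-flat compact steady solitons.
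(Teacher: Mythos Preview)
Your argument is correct and its core (Step~1) matches the route the paper indicates: the proposition is quoted from \cite[Proposition 4.1]{ZZ-4d}, and the paper says the proof goes by excluding type~\uppercase\expandafter{\romannumeral2} via Lemma~\ref{type2-diameter}. Your Step~1 --- identifying the time-$t$ slice $(N,h(t))$ with a homothety of the time-$0$ slice of a new blow-down along $w_i=\phi_{r_it}(p_i)$ --- is precisely the device that transfers the hypothesis \eqref{bound-h} from $t=0$ to all $t\le 0$; the same isometry trick appears in \eqref{levelset-isometric}--\eqref{h-sequence-limit} of the present paper. The only difference is your Step~2: instead of invoking Lemma~\ref{type2-diameter} (which says $R_{\min}(t)\,\mathrm{Diam}(h(t))^2\to\infty$ for type~\uppercase\expandafter{\romannumeral2}), you run the point-picking yourself and produce a compact nonflat steady soliton. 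In fact Step~2 is superfluous once Step~1 is in hand: your bound $\max_N R_{h(t)}\cdot\mathrm{Diam}(h(t))^2\le C_1^2$ already dominates $R_{\min}(t)\,\mathrm{Diam}(h(t))^2$ and directly contradicts Lemma~\ref{type2-diameter}. So your Step~2 is effectively a self-contained re-proof of that lemma, which is fine but not needed if one is willing to cite it.
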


 Compact ancient $\kappa$-solution of  type \uppercase\expandafter{\romannumeral1} has  been classified as follows (cf. \cite[Theorem 7.34]{CLN}, \cite{BW}, \cite{Ni}).

  \begin{lem}\label{classification-type1}
      Suppose that $(N^{n-1}, h(t))$ of $(n-1)$-dimension is a compact  ancient $\kappa$-solution of type \uppercase\expandafter{\romannumeral1} with ${\rm R_m}\ge0$. Then
      \begin{align}\label{N-split}
          &( N^{n-1}, h(t))\notag\\
          &\cong (N_1,h_1(t))\times\cdots\times (N_k,h_k(t))\times (\hat N_1, \hat h_1(t))\times \cdots \times (\hat N_\ell, \hat h_\ell(t)),
      \end{align}
      where each $(N_i,h_i(t))$ is a family of shrinking quotients of a closed symmetric space with nonnegative curvature operator, and each $(\hat N_j, \hat h_j(t))$ is a family of shrinking round quotient spheres.
  \end{lem}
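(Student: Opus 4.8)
The plan is to pass to the backward blow-down of $h(t)$, recognize it as a compact gradient shrinking Ricci soliton with ${\rm Rm}\ge 0$, classify such solitons using Hamilton's strong maximum principle together with the B\"ohm--Wilking theorem and the structure theory of Einstein metrics of special holonomy, and finally de-localize in time. For the first step I would invoke the type I hypothesis: the parabolic rescalings $h_j(t):=(-t_j)^{-1}h((-t_j)t)$ with $t_j\to-\infty$ satisfy $|R|\le C/|t|$ uniformly in $j$, and since $(N,h(t))$ is a non-flat $\kappa$-noncollapsed $\kappa$-solution they subconverge in the Cheeger--Gromov sense to a non-flat compact gradient shrinking Ricci soliton $(N_\infty,h_\infty(t);f_\infty)$ with ${\rm Rm}\ge 0$, where $N_\infty$ is diffeomorphic to $N$ and ${\rm Rm}\ge 0$ persists because it is preserved by Hamilton's maximum principle for the curvature operator and is closed under Cheeger--Gromov limits; this is essentially \cite[Theorem 7.34]{CLN} (see also \cite{Ni}). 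It then suffices to classify non-flat compact gradient shrinking Ricci solitons with ${\rm Rm}\ge 0$ and to verify that $(N,h(t))$ coincides with such a model for all $t$.

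For the classification I would apply Hamilton's strong maximum principle to the curvature operator of the Ricci flow generated by $(N_\infty,h_\infty,f_\infty)$: for $t>0$ the image of ${\rm Rm}$ is a parallel sub-bundle of $\Lambda^2 TN_\infty$, hence by the Ambrose--Singer theorem equals the restricted holonomy algebra. Passing to the universal cover and using the de Rham decomposition, $(\tilde N_\infty,\tilde h_\infty)\cong\mathbb{R}^{k_0}\times\prod_i(\tilde N_i,\tilde h_i)$ with each factor simply connected, irreducible, and of ${\rm Rm}\ge 0$; there is no Euclidean factor since a nontrivial compact shrinker admits no $f$ with $\nabla^2 f=c\,g$, $c>0$, and since $\nabla^2 f_\infty$ respects the splitting each $(\tilde N_i,\tilde h_i)$ is itself a compact shrinking soliton with ${\rm Rm}\ge 0$. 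On such an irreducible factor the strong maximum principle leaves two cases. If the holonomy is full, then ${\rm Rm}>0$, and by the B\"ohm--Wilking theorem \cite{BW} (a closed manifold with ${\rm Rm}>0$ evolves, after normalization, to constant positive curvature) together with the self-similarity of the soliton, $(\tilde N_i,\tilde h_i(t))$ is a shrinking round sphere. If the holonomy is reduced, then the Ricci-flat holonomies ($SU(m)$, $Sp(m)$, $G_2$, ${\rm Spin}(7)$) are excluded because a nontrivial compact shrinker has positive scalar curvature, while in the K\"ahler and quaternion-K\"ahler cases Mok's solution of the generalized Frankel conjecture, respectively the structure theory of positive quaternion-K\"ahler manifolds, combined with ${\rm Rm}\ge 0$, forces $(\tilde N_i,\tilde h_i)$ to be a compact irreducible symmetric space, which is Einstein and hence evolves by shrinking scaling.

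Finally, $\pi_1(N_\infty)$ acts on $\prod_i\tilde N_i$, possibly permuting isometric factors, so $N_\infty$ is a finite quotient of a product of round spheres and compact symmetric spaces; collecting the spherical space forms as the $(\hat N_j,\hat h_j(t))$ and the remaining quotients of symmetric factors as the $(N_i,h_i(t))$ yields the decomposition (\ref{N-split}) for the backward-limit soliton. Since each of these models is a rigid shrinking soliton, a backward-uniqueness/rigidity argument shows that a $\kappa$-noncollapsed type I ancient flow on $N$ with ${\rm Rm}\ge 0$ whose blow-down is this soliton must coincide with it for all $t$, whence $(N,h(t))$ itself decomposes as in (\ref{N-split}). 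I expect the main obstacle to be the special-holonomy case above --- ruling out exotic irreducible Einstein factors with ${\rm Rm}\ge 0$ genuinely requires Mok's uniformization theorem and Berger's holonomy classification --- together with the rigidity step promoting the blow-down identification to the whole flow; the rescaling and de Rham inputs are comparatively routine.
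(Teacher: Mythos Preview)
The paper does not prove this lemma at all: it is stated as a known classification with the parenthetical citation ``(cf.\ \cite[Theorem~7.34]{CLN}, \cite{BW}, \cite{Ni})'' and then used as a black box. So there is no ``paper's own proof'' to compare against; what you have written is essentially a reconstruction of the argument that lies behind those references, and your outline matches the standard route (backward type~I blow-down $\to$ compact shrinking soliton with ${\rm Rm}\ge 0$ $\to$ strong maximum principle and holonomy reduction $\to$ irreducible factors are round spheres or symmetric spaces $\to$ rigidity back to the full ancient flow).

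Two remarks on your sketch. First, the step you flag as the main obstacle, namely the special-holonomy case, can be streamlined: once an irreducible factor has reduced holonomy and is not Ricci-flat, it is automatically Einstein (the $U(m)$ case reduces via Mok to Hermitian symmetric, and the $Sp(m)\!\cdot\! Sp(1)$ case is Einstein by definition), at which point Tachibana's theorem (compact Einstein with ${\rm Rm}\ge 0$ is locally symmetric) finishes that factor without needing the full LeBrun--Salamon picture. Second, the ``rigidity step'' promoting the blow-down identification to all $t$ is exactly the content of Ni's result \cite{Ni}, so once you are willing to quote that reference (as the paper does) it is not an obstacle at all; the point there is that the asymptotic shrinker is Einstein, hence the reduced entropy is constant along the original flow, forcing the original flow to be the self-similar one. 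Finally, note that in this paper \cite{BW} is Brendle--Schoen rather than B\"ohm--Wilking; either result suffices for the ${\rm Rm}>0$ factor, but you should keep the citation consistent with the bibliography.
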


To prove Proposition \ref{compact-limit-ancient-solution},  we shall exclude the existence of  ancient $\kappa$-solutions of type \uppercase\expandafter{\romannumeral2}.  Actually, we prove the following  diameter estimate for such  ancient  solutions.

  \begin{lem}(\cite[Lemma 4.3]{ZZ-4d})\label{type2-diameter}
      Let $(N^{n-1},h(t))$ be an $(n-1)$-dimensional compact ancient $\kappa$-solution of type \uppercase\expandafter{\romannumeral2}.
      Then for any sequence $t_k\rightarrow -\infty$, it holds
      \begin{align*}
          R_{min}(t_k){\rm Diam}(h(t_k))^2\rightarrow \infty,
      \end{align*}
      where $R_{min}(t)=\min\{R(h( \cdot, t)\}$.
      Consequently,
      \begin{align}\label{infty-diam}
          \lim_{t\rightarrow -\infty} R_{min}(t){\rm Diam}(h(t))^2\rightarrow \infty.
      \end{align}
  \end{lem}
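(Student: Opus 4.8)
The plan is to argue by contradiction, assuming that along some sequence $t_k \to -\infty$ the product $R_{min}(t_k)\,\mathrm{Diam}(h(t_k))^2$ stays bounded, say by a constant $D$. After parabolically rescaling $h(t)$ at each time $t_k$ so that $R_{min}(t_k)$ becomes $1$ (equivalently, set $\tilde h_k(t) = R_{min}(t_k)\, h(t_k + R_{min}(t_k)^{-1} t)$), the rescaled solutions have uniformly bounded diameter at time $0$ and $R_{min} = 1$ there. Since $(N,h(t))$ is an ancient $\kappa$-solution with $\mathrm{Rm}\ge 0$, Perelman's curvature bound for ancient $\kappa$-solutions gives a uniform upper bound on the curvature at time $0$ of the rescaled flows, and the $\kappa$-noncollapsing together with the diameter bound keeps the geometry from degenerating. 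Hence, by Hamilton's compactness theorem, a subsequence of $(N, \tilde h_k(t); x_k)$ converges in the Cheeger--Gromov sense to a limit ancient flow $(N_\infty, h_\infty(t))$, which is again a $\kappa$-noncollapsed ancient $\kappa$-solution with $\mathrm{Rm}\ge 0$, is compact (because of the uniform diameter bound), and has $R_{min}(0) = 1$ on $N_\infty$.

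The key point is then to show that this limit $(N_\infty, h_\infty(t))$ must be of type I, which contradicts the fact that, being a limit of a type II solution, its curvature oscillation should be unbounded — more precisely, I would track the type II defining quantity $(-t)|R|$ through the rescaling and show it forces $(N_\infty, h_\infty)$ to again violate the type I bound, while the compactness of $N_\infty$ and the classification in Lemma \ref{classification-type1} (or directly Perelman's/Hamilton's analysis of compact ancient $\kappa$-solutions with $\mathrm{Rm}\ge 0$) show a compact ancient $\kappa$-solution that is a noncollapsed limit here must in fact be type I — it splits as shrinking quotients of symmetric spaces and round spheres, all of which are type I. This is the contradiction. Concretely, one shows that the normalized flow $\tilde h_k$ has, on a fixed time interval $[-T,0]$, curvature bounded by $(-t+1)^{-1}$ times a constant independent of $k$ for $k$ large (using that $t_k\to-\infty$, so the "ancient end" is pushed off to $-\infty$ under rescaling), hence the limit $h_\infty$ satisfies a type I bound; but being compact and type I it is a product of shrinking quotients, whose diameter squared times $R_{min}$ is a fixed finite number — and one checks this forces the original sequence to have had $R_{min}(t_k)\mathrm{Diam}(h(t_k))^2$ bounded only if $h$ itself were type I near $-\infty$, contradicting the type II hypothesis. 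The final statement (\ref{infty-diam}) follows because if the $\liminf$ over $t\to-\infty$ were finite we could extract such a sequence $t_k$.

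The main obstacle, and the place requiring the most care, is establishing the uniform curvature bound $|R(\cdot, t)| \le C(-t+1)^{-1}$ for the rescaled flows $\tilde h_k$ on a fixed backward time interval, uniformly in $k$: this is where the hypothesis $t_k \to -\infty$ is essential, since it guarantees that after rescaling the "bad" region of large $(-s)|R(s)|$ (which exists because $h$ is type II) is shifted to times far before $[-T,0]$, so on $[-T,0]$ one only sees controlled curvature. I would use Perelman's derivative and curvature estimates for ancient $\kappa$-solutions, the monotonicity of $R_{min}(t)$ under Ricci flow with $\mathrm{Rm}\ge0$, and Hamilton's trace Harnack inequality to propagate the time-$0$ bound backward, being careful that all constants depend only on $\kappa$, $n$, and the diameter bound $D$, not on $k$. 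Once this uniform estimate is in hand, the compactness argument and the classification of compact type I ancient $\kappa$-solutions close the loop.
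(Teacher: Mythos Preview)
The paper does not prove this lemma here; it is quoted from \cite[Lemma 4.3]{ZZ-4d} as the higher-dimensional analogue of \cite[Lemma 2.2]{ABDS}. Your contradiction setup and parabolic rescaling are the right opening moves, but the core step --- showing the compact limit $(N_\infty, h_\infty)$ is of type I --- has a genuine gap. Invoking Lemma \ref{classification-type1} is circular: that lemma classifies compact ancient $\kappa$-solutions \emph{already assumed} to be type I; it does not assert that every compact ancient $\kappa$-solution is type I (indeed the very $(N,h(t))$ you start from is compact and type II). Your alternative route, a uniform bound $|R|\le C(-t+1)^{-1}$ on each finite interval $[-T,0]$, would at best give locally bounded curvature for $h_\infty$, not the global estimate $\sup_{t\le 0}(-t)R<\infty$ that defines type I; note that every $\tilde h_k$ is itself type II (the type is scale-invariant), so the ``bad region'' of large $(-t)R$ is not pushed off to $-\infty$ by shifting the base time to $t_k$. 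Relatedly, your intuition that a limit of a type II solution must again violate the type I bound is simply false: Perelman's three-dimensional ancient oval is type II, yet its blow-down at $-\infty$ is the round shrinking sphere.

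The ingredient you are missing is that a blow-down of an ancient $\kappa$-solution as $t_k\to -\infty$ is necessarily a \emph{gradient shrinking soliton}: this is Perelman's asymptotic soliton, forced because the monotone reduced volume becomes constant in such a limit. Once $h_\infty$ is known to be a compact shrinker it is automatically type I, and one then finishes by comparing the scale $R_{\min}(t_k)$ with $(-t_k)^{-1}$ (the elementary inequality $R_{\min}(t)\le \tfrac{n-1}{2(-t)}$ together with Perelman's compactness and nonflatness of the asymptotic soliton make these comparable), which forces $(-t)R_{\max}(t)$ to stay bounded, i.e.\ $(N,h(t))$ is type I --- the desired contradiction. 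Your outline never supplies this soliton step, and without it the argument does not close.
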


Lemma \ref{type2-diameter}
can be regarded as a higher dimensional version of \cite[Lemma 2.2]{ABDS}.

  \section{Proofs of  main results}

  In this section, we prove Theorem \ref{noncompact-ancient-solution} as well as and Corollary \ref{notype1-2} and Theorem \ref{Zhao-Z-high-dim}.  First, we recall the following definition  introduced by Perelman (cf. \cite{P2}).

  \begin{defi}\label{epsilonclose}
      For any $\epsilon>0$, we say a pointed Ricci flow $\left(M_1, g_1(t); p_1\right), t \in$ $[-T, 0]$, is $\epsilon$-close to another pointed Ricci flow $\left(M_2, g_2(t); p_2\right), t \in[-T, 0]$, if there is a diffeomorphism onto its image $\bar{\phi}: B_{g_2(0)}\left(p_2, \epsilon^{-1}\right) \rightarrow M_1$, such that $\bar{\phi}\left(p_2\right)=p_1$ and $\left\|\bar{\phi}^* g_1(t)-g_2(t)\right\|_{C^{\left[\epsilon^{-1}\right]}}<\epsilon$ for all $t \in\left[-\min \left\{T, \epsilon^{-1}\right\}, 0\right]$, where the norms and derivatives are taken with respect to $g_2(0)$.
  \end{defi}

   By the compactness of  rescaled  Ricci flows  \cite[Proposition 1.3]{ZZ-4d},  we know that for any $\epsilon>0$, there exists a compact set  $D(\epsilon)>0$, such that for any $p\in M\setminus D$, $(M,g_p(t);p)$ is $\epsilon$-close to a splitting  flow $(h_p(t)+ds^2; p)$,  where  $h_p(t)$  is an $(n-1)$-dimensional ancient $\kappa$-solution.
Since  the  $\epsilon$-close  splitting  flow $(h_p(t)+ds^2; p)$ may not be unique for a point  $p$,
   we  may introduce a function on $M$ for each  $\epsilon$  as in \cite{Yi-flyingwings},
 \begin{align}\label{notation-f}F_{\epsilon}(p)=\inf_{h_p} \{ {\rm{Diam}} (h_p(0))\in(0,\infty)  \}.
 \end{align}
 For simplicity, we always  omit the subscribe $\epsilon$ in the function  $F_{\epsilon}(p)$ below.

\subsection{Proof of Theorem \ref{noncompact-ancient-solution}}

We  use the argument by contradiction. On the contrary,   we suppose that there exists a  sequence of rescaled Ricci flows $(M,g_{q_i}(t);q_i)$ ( $q_i\to\infty$), which converges to a limit Ricci flow   $(N'\times \mathbb{R},h'(t)+ds^2; q_\infty)$, where $(N',h'(t))$ is a compact ancient   $\kappa$-solution of type I.     Then  by Lemma  \ref{classification-type1},  there exists a constant $C_0 > 0$ such that for any small $\epsilon > 0$ it holds,
\begin{align*}
    F(q_i) =F_\epsilon(q_i)\leq C_0.
\end{align*}
By Proposition \ref{compact-levelset-compact},  it follows
 \begin{align}\label{type1-bound}
    F(q) \leq C,
\end{align}
for all $q \in f^{-1}(f(q_i))$. We note that the constant $C$ is uniform by the classification result, Lemma \ref{classification-type1}, i.e., it is  independent of the sequence of rescaled Ricci flows  with a limit Ricci flow,  which is a  compact    ancient   $\kappa$-solution of type I.

 On the other hand, for the sequence of $(M,g_{p_i}(t); p_i)$ in Theorem \ref{noncompact-ancient-solution}, we can choose a point $p_{i_0} \in \{p_i\}$ such that
\begin{align}\label{F_0}
    F(p_{i}) > 100C,
\end{align}
for all $i>i_0$. Let   $ \hat X=\frac{\nabla f}{|\nabla f|}$ and $\Gamma(s)$ be an  integral curve of $ \hat X$  starting from  $p_{i_0}$,  i.e.,  $\Gamma(0) = p_{i_0}$.  We note that $\Gamma(s)$   tends to the  infinity  by  \eqref{gradient-esti} and \eqref{linear-f},   since Lemma \ref{compact-R-decay} holds.   Thus by $\eqref{type1-bound}$ and $\eqref{F_0}$,  we can choose two sequences  $\{p^1_i\}$ and $\{p^2_i\}$  of points in  $\Gamma(s)$ to the infinity, which satisfy the following properties:
\begin{align}\label{sequence-p}
&1) f(p^1_i)<f(p^2_i);\notag\\
&2) f(p^1_i)< f(p^1_{i+1}),   f(p^2_i)< f(p^2_{i+1});\notag\\
&3) f(p^1_i)<  f(q_i)< f(p^2_{i});\notag\\
&4)  F(p^1_{i}) = F(p^2_{i}) = 10C\notag\\
&5) F(p) \le 10C, ~ \forall ~p\in \Gamma(s)~ {\rm with}~ f(p^1_i)\le f(p)\le f(p^2_{i}).
\end{align}
Thus there are $s^1_i, s^2_i, s_i$ with $s^1_i< s_i< s^2_i$ such that
\begin{align}\label{si-points}\Gamma(s^1_i) = p^1_i, \Gamma(s^2_i) = p^2_i~{\rm and}~\Gamma(s_i) = q'_i,
\end{align}
where $q'_i=\Gamma(s)\cap f^{-1}(f(q_i))$,  see  Figure \ref{fig1}.

\begin{figure}[h]
    \centering
    \includegraphics[width=1.0\textwidth]{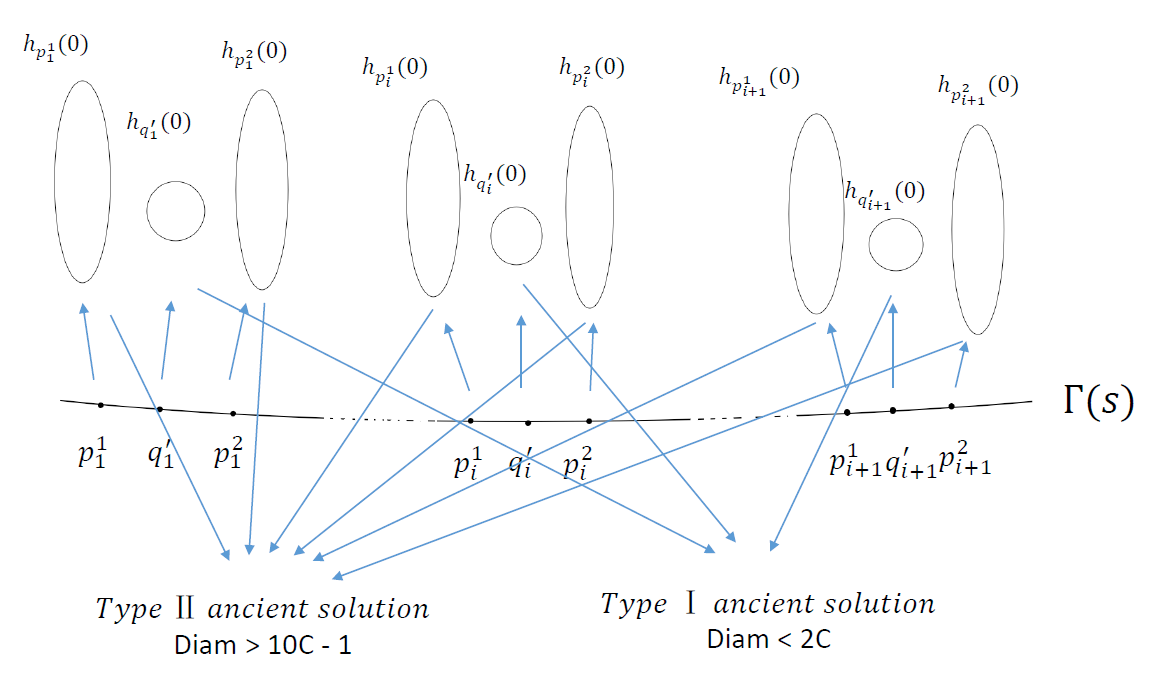}
    \caption{}
    \label{fig1}
\end{figure}

 By \cite[Proposition 1.3]{ZZ-4d} and the relation 4) in (\ref{sequence-p}),   $(M, g_{p^1_i}(t) ; p^1_i)$ converges  subsequently to $(N_1\times \mathbb{R},h_1(t)+ds^2; p^1_\infty)$, where $h_1(t)$ satisfies
\begin{align*}
    {\rm Diam}(h_1(0)) \in [10C-1, 10C+1].
\end{align*}
 Thus, by $\eqref{type1-bound}$, we see that $h_1(t)$ must be a compact  ancient $\kappa$-solution of type II.  By Lemma \ref{type2-diameter}, it follows
\begin{align}\label{type2-infty}
    R(p^1_\infty, t_k){\rm Diam}(h_1(t_k))^2 \rightarrow \infty
\end{align}
for any $t_k \to -\infty$. Hence, there exists a $k_0$ such that
\begin{align}\label{type2-k0}
    R(p^1_\infty, t_{k_0})^{1/2}{ \rm Diam}(h_1(t_{k_0})) > 100C.
\end{align}
We fix $t_{k_0}$ so that  $\epsilon^{-1}>-10 t_{k_0}$ as long as $\epsilon<< 1$.

 Next we claim there exists a constant $A > 0$ such that
\begin{align}\label{sequence-bound}
    s^2_i - s^1_i < A R(p^1_i)^{-1}
\end{align}
for all $i \gg 1$.

 Suppose that the above  claim is not true. Then by taking a subsequence, we may assume
\begin{align}\label{contra-s}
   R(p^1_i)(s^2_i - s^1_i) \to \infty.
\end{align}

   Recall that  $\left\{\phi_t\right\}_{t \in(-\infty, \infty)}$ is the flow of $-\nabla f$ with $\phi_0$ the identity and $(g(t), \Gamma(s))$ is isometric to $\left(g, \phi_t(\Gamma(s))\right)$.  Then
    \begin{align*}
        \phi_t(\Gamma(s))=\Gamma\left(s-\int_0^t|\nabla f|\left(\phi_\mu(\Gamma(s))\right) d \mu\right)
    \end{align*}
    Let $T_{k_0,i}=t_{k_0} R(p_i^1)^{-1}=t_{k_0} R\left(\Gamma\left(s^1_i\right)\right)^{-1}$ and
    $$ s'_i=s^1_i -\int_0^{T_{k_0,i}}|\nabla f|\left(\phi_\mu\left(\Gamma\left(s^1_i\right)\right)\right) d \mu.$$
    It follows
    \begin{align}\label{s-gap}
        s'_i - s^1_i \leq -T_{k_0,i} = -t_{k_0} R\left(\Gamma\left(s^1_i\right)\right)^{-1}.
    \end{align}
    By combining \eqref{contra-s} and \eqref{s-gap}, we see  that $s'_i \in [s^1_i,s^2_i]$ for $i \gg 1$.
    Hence, by  the relation 5) in (\ref{sequence-p}), we obtain
\begin{align}\label{F-s-2}
    F(\Gamma_1(s'_i)) \leq 10C.
\end{align}

    By the isometry, we have
\begin{align}\label{levelset-isometric}
        (M,R(\Gamma(s'_i))g;\Gamma(s'_i))&\cong (M,R(\Gamma(s^1_i),T_{k_0,i})g(T_{k_0,i});\Gamma(s^1_i))\notag\\
        &\cong (M,\frac{R(\Gamma(s^1_i),T_{k_0,i})}{R(\Gamma(s^1_i))}R(\Gamma(s^1_i))g(T_{k_0,i});\Gamma(s^1_i)).
    \end{align}
On the other hand,
  from the proof of  \cite[Proposition 3.6]{ZZ-4d}, we know that for each $ \Gamma(s^1_i)$, there exists a $(n-1)$-dimensional compact
  ancient $\kappa$-solution $h_{\Gamma(s^1_i)}(t)$ such that
 \begin{align}\label{s0-close}
     &  (M,R(\Gamma(s^1_i))g(R(\Gamma(s^1_i))^{-1}t);\Gamma(s^1_i))\notag\\
     & \overset{\epsilon-\text{close}}{\sim} (N\times \mathbb{R},h_{\Gamma(s^1_i)}(t)+ds^2; \Gamma(s^1_i)).
   \end{align}
   Since
$R(\Gamma_1(s^1_i),T_{k_0,i})\leq R(\Gamma(s^1_i))$  by the monotonicity of scalar curvature along $\Gamma(s)$, by (\ref{levelset-isometric}),
we get
\begin{align}\label{s1-close1}
    &(M,R(\Gamma(s'_i))g;\Gamma(s'_i))\notag\\
    & \overset{\epsilon-\text{close}}{\sim} (N\times \mathbb{R},\frac{R(\Gamma(s^1_i),T_{k_0,i})}{R(\Gamma(s^1_i))}h_{\Gamma(s^1_i)}(t_{k_0})+ds^2; \Gamma(s^1_i)).
\end{align}
Note that  there are also another  $(n-1)$-dimensional compact
ancient $\kappa$-solutions $h_{\Gamma(s'_i)}(t)$ corresponding to   the point $\Gamma(s'_i)$  as in (\ref{s0-close}) such that
\begin{align}\label{s1-close2}
    (M,R(\Gamma(s'_i))g;\Gamma(s'_i)) \overset{\epsilon-\text{close}}{\sim} (h_{\Gamma(s'_i)}(0)+ds^2, \Gamma(s'_i)).
\end{align}
Hence, combining $(\ref{s1-close1} )$ and $(\ref{s1-close2})$, we derive
\begin{align}\label{h-sequence-limit}
    h_{\Gamma(s'_i)}(0)& \overset{\epsilon-\text{close}}{\sim} \frac{R(\Gamma(s^1_i),T_{k_0,i})}{R(\Gamma(s^1_i))}h_{\Gamma(s^1_i)}(t_{k_0}) \notag\\
    &  \overset{\epsilon-\text{close}}{\sim} R_h(\Gamma(s^1_i),t_{k_0})h_{\Gamma(s^1_i)}(t_{k_0}).
\end{align}

By the convergence, we have
\begin{align}\label{two-ancients}
    h_{\Gamma(s^1_i)}(t_{k_0}) \overset{\epsilon-\text{close}}{\sim} h_1(t_{k_0}), ~{\rm as}~i\gg1.
\end{align}
 It follows
\begin{align}\label{Rh-sequence-limit}
    R_h(\Gamma(s^1_i),t_{k_0}) \overset{\epsilon'(\epsilon)-\text{close}}{\sim} R(p^1_\infty, t_{k_0}),
\end{align}
 where $R$ is the scalar curvature w.r.t. $h_1$.
  Thus by  \eqref{h-sequence-limit} and (\ref{type2-k0}), we estimate
\begin{align}\label{F-s-1}
    F(\Gamma(s'_i)) &\geq {\rm Diam} (h_{\Gamma(s'_i)}(0)) - \epsilon \notag\\
    &\geq {\rm Diam}(R_h(\Gamma(s^1_i),t_{k_0})h_{\Gamma(s^1_i)}(t_{k_0})) - \epsilon(1+\epsilon') \notag\\
    &> 98C.
\end{align}
But this is impossible by (\ref{F-s-2}).
Hence,   (\ref{sequence-bound}) must be true.

Now we can finish the proof of  Theorem \ref{noncompact-ancient-solution}.   By   (\ref{sequence-bound}) and the relation 3) in  (\ref{sequence-p}), we have
\begin{align}\label{s-type1}
    s^2_i - s_i \leq s^2_i - s^1_i \leq A R\left(\Gamma\left(s^1_i\right)\right)^{-1} \leq A R\left(\Gamma\left(s_i\right)\right)^{-1},
\end{align}
where the last inequality follows from the monotonicity of $R$ along $\Gamma(s)$. Since $(M, g_{\Gamma(s_i)}(t); \Gamma(s_i))$ converges subsequently to the  limit flow $(N'\times \mathbb{R}, h'(t)+ds^2; q_\infty)$,  by the shrinking property  of  type I solution  $(N', h'(t))$   in Lemma \ref{classification-type1}, we know
\begin{align}\label{type1-scaled-diameter}
    {\rm{Diam}}(R_{h'}(q_\infty,t)h'(t)))\leq 2C, ~\forall~t \leq 0.
\end{align}
 Then as  in $(\ref{h-sequence-limit})$, we get
\begin{align}\label{type1-ancient}
    h_{\Gamma(s^2_i)}(0) \overset{\epsilon-\text{close}}{\sim}  R_{h}(\Gamma(s_i),t'_i)h_{\Gamma(s_i)}(t'_i)
\end{align}
for some $t'_i$,  where  $-t'_i < 2A$   by \eqref{s-type1}.   Thus ,  combining $(\ref{type1-scaled-diameter})$ and (\ref{type1-ancient}) and by the convergence of  $(M, g_{\Gamma(s_i)}(t); \Gamma(s_i))$, we obtain
\begin{align}\label{type1-F-1}
    F(\Gamma(s^2_i)) \leq 3C.
\end{align}
But this is impossible  since by the relation 4) in (\ref{sequence-p}) it holds
\begin{align}\label{type1-F-2}
    F(\Gamma(s^2_i)) =   F(p^2_i) =10C.
\end{align}
Therefore, we prove the  theorem.


\subsection{Proofs of Corollary \ref{notype1-2} and Theorem \ref{Zhao-Z-high-dim}}

\begin{proof}[Proof of Corollary \ref{notype1-2}]
  Suppose that  there exist two splitting limit flows $(N_1 \times \mathbb{R}, h_1(t) + ds^2; p^1_\infty)$ and $(N_2 \times \mathbb{R}, h_2(t) + ds^2; p^2_\infty)$ of rescaled flows $(M, g_{p^1_i}(t); p^1_i)$ and $(M, g_{p^2_i}(t); p^2_i),$ respectively, such that $(N_1, h_1(t))$ is a compact ancient $\kappa$-solution of type \uppercase\expandafter{\romannumeral1}, and $(N_2, h_2(t))$ is another compact ancient $\kappa$-solution of type \uppercase\expandafter{\romannumeral2}. We claim that
  \begin{equation}\label{infty-diameter}
    \limsup_{\epsilon \to 0} \limsup_{p \to \infty} F_\epsilon(p) = \infty.
  \end{equation}

  On the contrary,   there will be  a uniform constant $C$ such that all split ancient $\kappa$-solutions $(N^{n-1}, h(t))$ of $(n-1)$-dimension satisfy $(\ref{bound-h})$. Then by  Proposition \ref{compact-limit-ancient-solution}, it follows that every split limit flow $h(t)$ must be an ancient $\kappa$-solution of type \uppercase\expandafter{\romannumeral1}.  But this is impossible since  $h_2(t)$ is a compact ancient $\kappa$-solution of type \uppercase\expandafter{\romannumeral2}.  Thus the claim  is true.

  By  \eqref{infty-diameter},  it is easy to see that  there is a sequence of pointed flows $(M, g_{q_i}(t); q_i)$, which converges subsequently to a splitting Ricci flow $(N' \times \mathbb{R}, h'(t) + ds^2; q_\infty)$ for some noncompact ancient $\kappa$-solution $(N', h'(t))$.  Thus by Theorem \ref{noncompact-ancient-solution},   there cannot exist a compact splitting limit flow of type \uppercase\expandafter{\romannumeral1}.    But this is impossible since   $h_1(t)$ is  a compact  ancient $\kappa$-solution of type  \uppercase\expandafter{\romannumeral1}.  This proves  the corollary.
\end{proof}

\begin{proof}[Proof of Theorem \ref{Zhao-Z-high-dim}] By the assumption,  the  $(n-1)$-dimensional split ancient flow
 $(N,h(t))$ of limit of $(M,g_{p_i}(t),p_i)$ is  a family of shrinking  round quotient spheres. Namely,  $(N,h(0))$ is a quotient  of
round  sphere, so it  is of type \uppercase\expandafter{\romannumeral1}.  We first show that  $(M,g)$ has positive Ricci curvature on $M$.

 On the contrary,   ${\rm Ric}(g)$   is not strictly positive.   We note that   the scalar curvature  $R(p_i)$ decaying to zero is still true in the proof of  Lemma \ref{compact-R-decay} without  ${\rm Ric}(g)>0$ away from a compact set of $M$.
  Then as in the proof of \cite[Lemma 4.6]{DZ-JEMS}, we see that  $X_i=R(p_i)^{-\frac{1}{2}}\nabla f\to X_\infty$ w.r.t.  $(M,g_{p_i}(t); p_i)$,  where $X_\infty$ is a non-trivial parallel vector field.  Thus  according to the argument  in  the proof of \cite [Theorem 1.3]{DZ-JEMS},    the universal cover of $(N,h(t)) $ must split off a flat factor $\mathbb{R}^d$ ($d\geq 1$).  However, the universal cover of $N$ is $S^{n-1}$.  This is a contradiction!  Hence, we conclude that ${\rm Ric}(g)>0$ on $M$.

  Now we divide   into two cases to prove the theorem.

  Case 1:
  $$\limsup_{p\rightarrow \infty}F_\epsilon(p)<C. $$
  for any $\epsilon<1$.  Then by  Proposition \ref{compact-limit-ancient-solution} and Lemma \ref {classification-type1},   all  $(n-1)$-dimensional split ancient $\kappa$-solutions   $(N',h'(t))$ are of
      type \uppercase\expandafter{\romannumeral1},   and each  of them  is one described  in    (\ref{N-split}).

  Case 2:
  $$\limsup_{\epsilon\to 0}\limsup_{p\rightarrow \infty}F_\epsilon(p)=\infty.$$
  In this case,  there will exist  a sequence of pointed flows $(M,g_{q_i}(t);q_i)$, which  converges  subsequently to a splitting Ricci flow $(N'\times\mathbb{R}, h'(t)+ds^2; q_\infty)$ for some noncompact ancient $\kappa$-solution $(N', h'(t))$.  But this is impossible by Theorem \ref{noncompact-ancient-solution}  since we  already have had  a  split ancient limit  flow $(N,h(t))$ of type  \uppercase\expandafter{\romannumeral1}.  Thus Case 2 can be excluded.

It remains to show  that every split limit flow  $(N',h'(t))$ in Case 1 is  in fact a  family of shrinking round spheres.

By Lemma \ref{compact-R-decay},  the scalar curvature of  $(M, g)$ decays to zero uniformly.  Then $(M, g)$ has unique equilibrium point $o$ by the fact  ${\rm Ric}(g)>0$.   Thus  the level set $\Sigma_r=\{f(x)=r\}$  is a closed manifold for any $r>0$, and it is diffeomorphic to $S^{n-1}$ (cf. \cite[Lemma 2.1]{DZ-JEMS}).

 On the other hand,   as in the proof of \cite[Lemma 2.6]{ZZ-4d},     the level sets
    $(\Sigma_{f(q_i)},  \bar g_{q_i};  q_i)$
      converge   subsequently to  $(N', h'(0); q_\infty)$
      w.r.t.  the induced metric  $\bar g_{q_i}$  on $\Sigma_{f(q_i)}$  by  $g_{q_i}$.  Since each $\Sigma_{f(q_i)}$ is diffeomorphic to $S^{n-1}$,    $N'$ is also  diffeomorphic to $S^{n-1}$. Thus   $(N',h'(t))$ is a  family of shrinking round spheres.

     By the above argument, we see that
  the  condition (ii)  in \cite[ Definition 0.1]{ZZ-4d} is satisfied. Thus  by \cite[Lemma 6.5]{DZ-SCM},  $(M,g)$ is asymptotically cylindrical.  It follows that  $(M,g)$ is isometric to the Bryant Ricci soliton up to scaling by \cite{Bre-high}. Hence,  the theorem  is proved.

 \end{proof}

  \end{document}